\def\A{\mathbf{A}}
\def\F{\mathbf{F}}
\def\GL{\mathop{\mathrm{GL}}\nolimits}
\def\Aut{\mathop{\mathrm{Aut}}\nolimits}
\def\cA{\mathcal{A}}
\def\cT{\mathcal{T}}
\def\cV{\mathsf{T}}
\def\cS{\mathcal{S}}
\def\diag{\mathop{\mathrm{diag}}\nolimits}
\DeclareMathOperator{\psl}{\![\![\!}
\DeclareMathOperator{\psr}{\!]\!]}
\renewcommand{\tilde}{\widetilde}
\def\t{\mathfrak t}
\newtheorem{theorem}{Theorem}[section]
\newtheorem{lemma}[theorem]{Lemma}
\newtheorem{proposition}[theorem]{Proposition}
\newtheorem{corollary}[theorem]{Corollary}
\theoremstyle{definition}
\newtheorem{remark}[theorem]{Remark}
\newtheorem{example}[theorem]{Example}
\begin{document}

\date{} 
\title[toroidal automorphic forms]{Toroidal automorphic forms for some function fields}
\author[G.\ Cornelissen]{Gunther Cornelissen}
\address{Mathematisch Instituut, Universiteit Utrecht, Postbus 80.010, 3508 TA Utrecht, Nederland}
\email{\{cornelis,lorschei\}@math.uu.nl}
\author[O.\ Lorscheid]{Oliver Lorscheid} 

\maketitle

\begin{abstract} 
\noindent 
Zagier introduced toroidal automorphic forms to study the zeros of zeta functions: an automorphic form on $\GL_2$ is toroidal if all its right translates integrate to zero over all nonsplit tori in $GL_2$, and an Eisenstein series is toroidal if its weight is a zero of the zeta function of the corresponding field. We compute the space of such forms for the global function fields of class number one and genus $ g \leq 1$, and with a rational place. The space has dimension $g$ and is spanned by the expected Eisenstein series. We deduce an ``automorphic'' proof for the Riemann hypothesis for the zeta function of those curves.

\end{abstract}
\section{Introduction}

Let $X$ denote a smooth projective curve over a finite field $\F_q$ with $q$ elements, $\A$ the adeles over its function field $F:=\F_q(X)$, $G=\GL_2$, $B$ its standard (upper-triangular) Borel subgroup, $K=G({\mathcal O}_\A)$  the standard maximal compact subgroup of $G_{\A}$, with ${\mathcal O}_\A$ the maximal compact subring of $\A$, and $Z$ the center of $G$. Let $\cA$ denote the space of unramified automorphic forms $f: {G_F}\backslash G_\A/KZ_\A \to \bf C$. We use the following notations for matrices:  $$\diag(a,b)=\big( \begin{smallmatrix} a & 0 \\ 0 & b \end{smallmatrix} \big) \mbox{ and } \psl a,b \psr=\big( \begin{smallmatrix} a & b \\ 0 & 1 \end{smallmatrix} \big).$$

There is a bijection between quadratic separable field extensions $E/F$ and conjugacy classes of maximal non-split tori in $G_F$ via $$ E^\times = \Aut_E(E)  \subset  \Aut_F(E)  \simeq  G_F. $$  If $T$  is a non-split torus in $G$ with $T_F \cong E^\times$, define the space of \emph{toroidal automorphic forms for $F$ with respect to $T$ (or $E$)} to be
 \begin{equation} \label{def} \cV_F(E) = \left\{ \ f \, \in \, \cA \right. \ \mid \ \forall g \, \in \, G_\A, 
   \int\limits_{T_F Z_\A\backslash T_\A} \hspace*{-1em} f(tg) \, dt = 0 \left.\right\}. \end{equation} The integral makes sense since $T_F Z_\A\backslash T_\A$ is compact, and the space only depends on $E$, viz., the conjugacy class of $T$. The space of \emph{toroidal automorphic forms for $F$} is 
$$\cV_F = \bigcap\limits_E \, \cV_F(E), $$ where the intersection is over all quadratic separable $E/F$. The interest in  these spaces lies in the following version of a formula of Hecke (\cite{Hecke}, Werke p.\ 201); see Zagier, \cite{Zagier} pp.\ 298--299 for this formulation, in which the result essentially follows from Tate's thesis:

\begin{proposition} \label{Hecke} Let $\zeta_E$ denote the zeta function of the field $E$. Let $\varphi: \A^2 \rightarrow {\mathbf C}$  be a Schwartz-Bruhat function. Set $$f(g,s)= |\det g|^s_F \int\limits_{\A^\times} \varphi((0 , a)g)|a|^{2s} d^\times a .$$ 
An Eisenstein series $E(s)$
$$E(s)(g):= \sum\limits_{\gamma \in B_F \backslash G_F} f(\gamma g, s) \ \ \ \ (\mathrm{Re}(s)>1)$$ satisfies
$$\int\limits_{T_F Z_\A\backslash T_\A} \hspace*{-1em} E(s)(tg) \, dt = c(\varphi,g,s) |\det g|^s \zeta_E(s) $$ for some holomorphic function $c(\varphi,g,s)$. For every $g$ and $s$, there exists a function $\varphi$ such that $c(\varphi,g,s) \neq 0$. In particular, $E(s) \in \cV_F(E) \iff \zeta_E(s)=0$. \hfill $\Box$
\end{proposition}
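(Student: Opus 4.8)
The plan is to \emph{unfold} the toroidal integral of $E(s)$ and recognise it as a Tate zeta integral over the ideles of $E$. Write $Z_F=F^\times$ for the central (scalar) matrices. A short computation gives $f(zg,s)=f(g,s)$ for $z\in Z_\A$, so $E(s)\in\cA$ and the torus integral makes sense. The decisive algebraic input is the orbit structure of the torus on the flag variety: identifying $B_F\backslash G_F$ with $\P^1(F)$ through the bottom row of $g$, the right action of $T_F\cong E^\times$ becomes the natural action on $\P^1(F)=\P(E)$ by multiplication, where $E$ is regarded as a two-dimensional $F$-vector space. Since $E$ is a \emph{field}, $E^\times$ is transitive on $E\setminus\{0\}$, hence on $\P(E)$, and the stabiliser of every line is $F^\times=Z_F$. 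Thus $B_F\backslash G_F\cong Z_F\backslash T_F$ as a right $T_F$-set, a single orbit. (This is exactly where non-splitness enters; a split torus would produce several orbits and the argument below would degenerate into a product of zeta factors.)

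Granting this, for $\mathrm{Re}(s)>1$, where the series converges absolutely, the orbit identification rewrites the Eisenstein sum as a sum over $Z_F\backslash T_F$:
$$E(s)(tg)=\sum_{\gamma\in B_F\backslash G_F}f(\gamma tg,s)=\sum_{\tau\in Z_F\backslash T_F}f(\tau tg,s).$$
Next I would fold this sum into the integral over $T_FZ_\A\backslash T_\A$. Setting $\bar H=Z_\A\backslash T_\A$ and $\bar\Gamma=T_FZ_\A/Z_\A\cong Z_F\backslash T_F$ (discrete, since $T_F\cap Z_\A=Z_F$), the standard unfolding identity $\int_{\bar\Gamma\backslash\bar H}\sum_{\gamma\in\bar\Gamma}\Phi(\gamma h)\,dh=\int_{\bar H}\Phi(h)\,dh$ collapses the sum and gives
$$\int_{T_FZ_\A\backslash T_\A}E(s)(tg)\,dt=\int_{Z_\A\backslash T_\A}f(tg,s)\,dt.$$

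I would then expand $f$ and merge the two remaining integrations. Identify $\A^2$ with $\A_E:=\A\otimes_F E$ so that $T_\A=\A_E^\times$ acts by multiplication and $(0,a)\leftrightarrow av_0$ for a fixed $v_0\in E^\times$; using $|\det t|_\A=|N_{E/F}t|_\A=\|t\|$ and $|a|_\A^2=\|a\|$ for the idele norm $\|\cdot\|$ on $E$, the substitution $u=ta$ combines $a\in Z_\A$ and $t\in Z_\A\backslash T_\A$ into one variable $u\in\A_E^\times$ (the Haar measures match up to a constant absorbed below). With $\varphi_g(x):=\varphi(xg)$ a Schwartz-Bruhat function on $\A_E$, the change of variables $u\mapsto uv_0^{-1}$ removes $v_0$ because $\|v_0\|=1$ by the product formula, and this yields
$$\int_{T_FZ_\A\backslash T_\A}E(s)(tg)\,dt=|\det g|^s\int_{\A_E^\times}\varphi_g(u)\,\|u\|^s\,d^\times u,$$
the inner integral being exactly Tate's global zeta integral $Z(\varphi_g,s)$ for the field $E$.

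Finally I would invoke Tate's thesis. Over a function field each local factor of $Z(\varphi_g,s)$ is a polynomial in $q_w^{-s}$ times the local Euler factor of $\zeta_E$, so $Z(\varphi_g,s)=c(\varphi,g,s)\,\zeta_E(s)$ with $c(\varphi,g,s):=Z(\varphi_g,s)/\zeta_E(s)$ holomorphic; and since $g$ is invertible, $\varphi\mapsto\varphi_g$ is a bijection of Schwartz-Bruhat functions, so for any prescribed $g$ and $s$ one may choose the local components to make each local integral, hence $c$, non-zero. This gives the asserted formula together with its non-vanishing clause. The equivalence follows: if $\zeta_E(s)=0$ the integral vanishes for all $g$ and all $\varphi$, so $E(s)\in\cV_F(E)$; if $\zeta_E(s)\neq0$ one chooses $\varphi,g$ with $c(\varphi,g,s)\neq0$, so the integral is non-zero and $E(s)\notin\cV_F(E)$. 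The one genuinely essential point is the single-orbit statement for the non-split torus, which alone produces one clean copy of $\zeta_E$; the remaining analytic content — holomorphy of $Z(\varphi_g,s)/\zeta_E(s)$ and the freedom to keep it non-zero — is precisely the output of Tate's thesis, which I would cite rather than reprove.
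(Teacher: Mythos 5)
Your proof is correct and is precisely the argument the paper does not write out but delegates to its sources: the paper states the proposition with a $\Box$ and refers to Zagier (pp.~298--299) and Tate's thesis, and your unfolding — single $T_F$-orbit on $B_F\backslash G_F\cong\P(E)$ with stabiliser $F^\times$, collapse to a Tate zeta integral over $\A_E^\times$, then holomorphy and non-vanishing of the local factors — is exactly that standard derivation. No gaps; the one point you rightly flag as essential (transitivity of the non-split torus on $\P(E)$) is indeed where the hypothesis that $E$ is a field is used.
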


\begin{remark} Toroidal integrals of parabolic forms are ubiquitous in the work of Waldspurger (\cite{Wald}, for recent applications, see Clozel and Ullmo \cite{CU} and Lysenko \cite{Lys}). Wie\-lon\-sky and Lachaud studied analogues for $\GL_n, \ n \geq 2$, and tied up the spaces with Connes' view on zeta functions (\cite{Wielonsky}, \cite{Lachaud}, \cite{Lachaud1}, \cite{Connes}). 
\end{remark} 

Let $\mathcal H = C^\infty_0(K \backslash G_\A / K)$ denote the bi-$K$-invariant Hecke algebra, acting by convolution on $\cA$. There is a  correspondence between $K$-invariant $G_\A$-modules and Hecke modules; in particular, we have
\begin{lemma}  The spaces $\cV_F(E)$ (for each $E$ with corresponding torus $T$) and $\cV_F$ are invariant under the Hecke algebra $\mathcal H$, and  \begin{equation} \label{defH} \cV_F(E) \subseteq \left\{ \ f \, \in \, \cA \right. \ \mid \ \forall \Phi \, \in \, {\mathcal H}, 
   \int\limits_{T_F Z_\A\backslash T_\A} \hspace*{-1em} \Phi(f)(t) \, dt = 0 \left.\right\}. \qed\end{equation} \end{lemma}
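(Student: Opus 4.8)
The plan is to prove two things: the Hecke-invariance of the spaces $\cV_F(E)$ and $\cV_F$, and the inclusion \eqref{defH}. Both should follow from a single computation showing that the toroidal integral commutes with the action of $\mathcal{H}$, together with a left-invariance property of the integral.

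First I would unwind the definitions. An element $\Phi \in \mathcal{H}$ acts on $f \in \cA$ by convolution, $(\Phi \ast f)(g) = \int_{G_\A} \Phi(h) f(gh)\, dh$ (or $f(h^{-1}g)$, depending on the convention), and since $\Phi$ is bi-$K$-invariant and $f$ is right $KZ_\A$-invariant, $\Phi \ast f$ again lies in $\cA$. The key observation is that the toroidal integral operator, sending $f$ to the function $g \mapsto \int_{T_F Z_\A \backslash T_\A} f(tg)\, dt$, acts on the \emph{left} (in the variable appearing before $g$), whereas $\mathcal{H}$ acts on the \emph{right}. I would therefore interchange the two integrals using Fubini — legitimate because $T_F Z_\A \backslash T_\A$ is compact and $\Phi$ has compact support modulo $K$ — to obtain
$$ \int\limits_{T_F Z_\A\backslash T_\A} (\Phi \ast f)(tg)\, dt = \int\limits_{G_\A} \Phi(h) \left( \int\limits_{T_F Z_\A\backslash T_\A} f(tgh)\, dt \right) dh. $$
If $f \in \cV_F(E)$, the inner integral vanishes for every value of the argument $gh$, so the whole expression is zero; hence $\Phi \ast f \in \cV_F(E)$. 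This gives invariance of each $\cV_F(E)$, and since $\cV_F$ is their intersection, it is invariant too.

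For the inclusion \eqref{defH}, the point is that the right-hand side only tests the toroidal integral of $\Phi(f)$ at the identity element $g = 1$, rather than at all $g \in G_\A$. Thus it is a priori a weaker condition, and I would deduce the inclusion by showing that for $f \in \cV_F(E)$ the integral $\int_{T_F Z_\A \backslash T_\A} \Phi(f)(t)\, dt$ vanishes. But $\Phi(f) = \Phi \ast f$ is again in $\cV_F(E)$ by the invariance just established, and its toroidal integral vanishes at \emph{every} $g$, in particular at $g = 1$. So the inclusion is essentially a corollary of the Hecke-invariance.

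I expect the main technical obstacle to be the justification of interchanging the integral over the compact quotient with the convolution integral, and — more subtly — verifying that the left-translated function $t \mapsto f(tg)$ is genuinely a function on the compact quotient $T_F Z_\A \backslash T_\A$, i.e.\ that left-invariance under $T_F$ and $Z_\A$ is preserved when we insert the Hecke operator and the extra right argument $g$. Left $T_F$-invariance is automatic since $f$ is left $G_F$-invariant and $T_F \subseteq G_F$; left $Z_\A$-invariance follows from $f$ being $Z_\A$-invariant on the right once one uses that $Z_\A$ is central. Once these points are checked, the Fubini interchange is routine given compactness, and the two claims follow as above.
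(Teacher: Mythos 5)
Your argument is correct and is essentially the proof the paper has in mind: the paper states the lemma with no written proof, appealing only to the correspondence between $K$-invariant $G_\A$-modules and Hecke modules, i.e.\ precisely the fact that the right convolution action of $\mathcal H$ commutes with the left toroidal integral, which is the Fubini interchange you carry out. Your deduction of the inclusion \eqref{defH} as the special case $g=1$ of the invariance statement, and your checks of well-definedness of $t\mapsto f(tgh)$ on $T_F Z_\A\backslash T_\A$, are exactly the details being suppressed.
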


Now assume $F$ has class number one and there exists a place $\infty$ of degree one for $F$; let $t$ denote a local uniformizer at $\infty$. Strong approximation implies that we have a bijection
$${G_F}\backslash G_{\bf A}/K Z_\infty \stackrel{\sim}{\longrightarrow}  \Gamma \ \backslash G_\infty / K_\infty Z_\infty,$$
where $\Gamma=G(A)$ with $A$ the ring of functions in $F$ holomorphic outside $\infty$, and a subscript $\infty$ refers to the $\infty$-component. We define a graph $\cT$ with vertices $V \mathcal T = G_\infty\, /\, K_\infty \, Z_\infty$. If $\sim$ denotes equivalence of matrices modulo $K_\infty \, Z_\infty$,  then we call vertices in $V\mathcal T$ given by classes represented by matrices $g_1$ and $g_2$  adjacent, if 
$g_1^{-1}\, g_2 \sim \psl t, b \psr \ {\rm or} \ \psl  t^{-1}, 0 \psr$ for some $b \in {\mathcal O}_\infty/t$. Then $\cT$ is a tree that only depends on $q$ (the so-called Bruhat-Tits tree of $\mathrm{PGL}(2,F_\infty)$, cf.\ \cite{S1}, Ch.\ II). 

 The Hecke operator $\Phi_\infty \in {\mathcal H}$ given by the characteristic function of $K \psl t, 0 \psr K $ maps a vertex of $\cT$  to its neighbouring vertices. The action of $\Phi_\infty$ on the quotient graph $\Gamma \backslash \cT$ can be computed from the orders of the $\Gamma$-stabilizers of vertices and edges in $\cT$. When drawing a picture of $\Gamma \backslash \cT$, we agree to label a vertex along the edge towards an adjacent vertex by the corresponding weight of a Hecke operator, as in the next example.
 
 \begin{example} In Figure \ref{fig1}, one sees the graph $\Gamma \backslash \cT$ for the function field of $X={\bf P}^1$, with the well-known vertices representing $\{c_i =  \psl \pi^{-i}, 0 \psr \}_{i \geq 0}$  and the weights of $\Phi_\infty$, meaning \begin{equation}\label{HeckeP1} \mbox{for }n\geq 1,\ \Phi_\infty (f)(c_n) = q f(c_{n-1}) + f(c_{n+1}) \mbox{ and } \Phi_\infty (f)(c_0) = (q+1) f(c_1). \end{equation}
 \end{example}
 
 \begin{figure}[ht]
  \centering
   \input{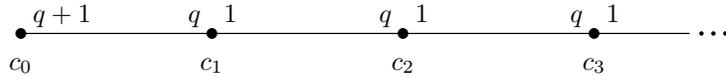} 
  \caption{The graph $\Gamma \backslash \cT$ for $X={\bf P}^1$}
   \label{fig1}
\end{figure}

\section{The rational function field}

First, assume $X={\mathbf P}^1$ over ${\mathbf F}_q$, so $F$ is a rational function field. Set $E={\mathbf F}_{q^2} F$ the quadratic constant extension of $F$.

\begin{theorem} \label{P1} $\cV_{F}=\cV_{F}(E)=\{0\}.$ \end{theorem}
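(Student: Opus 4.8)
The plan is to realize the toroidal condition as the kernel of a single \emph{period operator} on functions on the half-line $\Gamma\backslash\cT$, and to prove that this operator is injective by combining one geometric input --- that the whole global torus fixes the base vertex $c_0$ --- with the Hecke recursion \eqref{HeckeP1}. By strong approximation an $f\in\cA$ is just a sequence of values $f(c_0),f(c_1),f(c_2),\dots$, and I would define $P\colon\cA\to\cA$ by $P(f)(g)=\int_{T_FZ_\A\backslash T_\A}f(tg)\,dt$, so that $\cV_F(E)=\ker P$ by \eqref{def}. Since the torus acts on the left while $\mathcal H$ acts by right convolution, $P$ commutes with every $\Phi\in\mathcal H$; this is the operator form of the invariance recorded in the Lemma. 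It therefore suffices to understand $P$ on the $c_n$ and to use $P\Phi_\infty=\Phi_\infty P$.

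The geometric heart of the argument is the claim that $P(f)(c_0)=\mathrm{vol}(T_FZ_\A\backslash T_\A)\cdot f(c_0)$, i.e.\ that the global torus fixes $c_0$. I would use the explicit constant-coefficient embedding $T_F=\{\big(\begin{smallmatrix}a&b\delta\\ b&a\end{smallmatrix}\big):a,b\in F\}$ coming from a basis of $\mathbf F_{q^2}/\mathbf F_q$, so that $T_F\cong E^\times$ with $E=\mathbf F_{q^2}F$ everywhere unramified. Because $\infty$ is inert and unramified, a uniformizer of $E$ at the place above $\infty$ may be taken to be the uniformizer $t$ of $F$, which is central; combined with the fact that $E$ has class number one (it is $\mathbf P^1$ over $\mathbf F_{q^2}$), this gives $T_\A=T_F\,Z_\A\,T(\hat{\mathcal O})$ with $T(\hat{\mathcal O})\subseteq K$. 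Hence every $t$ moves $c_0$ exactly as an element of $KZ_\A$ does, and left $G_F$-/right $KZ_\A$-invariance of $f$ forces $f(tc_0)=f(c_0)$ throughout the compact integration domain, proving the claim.

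Granting this, the rest is formal. For $f\in\cV_F(E)=\ker P$ and every $k\ge0$ we get $0=P(\Phi_\infty^k f)(c_0)=\mathrm{vol}\cdot(\Phi_\infty^k f)(c_0)$, so $(\Phi_\infty^k f)(c_0)=0$ for all $k$. Iterating \eqref{HeckeP1} along the ray shows $(\Phi_\infty^k f)(c_0)=(q+1)f(c_k)+\sum_{j<k}\ast\, f(c_j)$ with nonzero leading coefficient, so this is a triangular system and induction on $k$ yields $f(c_0)=f(c_1)=\dots=0$, i.e.\ $f=0$. Thus $\cV_F(E)=\{0\}$, and since $\cV_F=\bigcap_{E'}\cV_F(E')\subseteq\cV_F(E)$ we also obtain $\cV_F=\{0\}$.

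I expect the main obstacle to be the geometric claim of the second paragraph: translating the idelic class-number-one statement for $E$ into the assertion that $T_FZ_\A\backslash T_\A$ lands in the $KZ_\A$-stabilizer of $c_0$, and verifying that the generator of $\mathrm{Pic}(E)$ is genuinely represented by the central uniformizer at $\infty$ --- this is precisely where inertness and unramifiedness of $\infty$ are used. Once that is secured, the Hecke commutation and the triangular descent are routine. The conclusion is moreover consistent with Proposition \ref{Hecke}, since the zeta function of $\mathbf P^1$ over $\mathbf F_{q^2}$ has no zeros and there are no cusp forms at this level.
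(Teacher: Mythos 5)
Your proposal is correct and follows essentially the same route as the paper: the key step in both is that the toroidal period at $g=1$ for the constant quadratic extension collapses to a nonzero multiple of $f(c_0)$ (via triviality of the relevant class groups and $T_\A\cap K\cong{\mathcal O}_{\A_E}^\times$), after which Hecke-equivariance of the period and the recursion (\ref{HeckeP1}) give the same triangular system forcing $f=0$. The only cosmetic difference is that you package the argument as injectivity of a period operator commuting with $\mathcal H$, while the paper applies $\Phi_\infty^k$ directly to the vanishing integral.
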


\begin{proof}
Let $T$ be a torus with $T_F=E^\times$, that has a basis over $F$ contained in the constant extension ${\bf F}_{q^2}$. The integral defining $f \in \cV_F(E)$ in equation (\ref{def}) for the element $g=1 \in G_\A$ becomes $$  \int\limits_{T_F Z_\A\backslash T_\A} \hspace*{-1em}  f(t) \, dt \ = \ \kappa \cdot \hspace*{-2em} \int\limits_{T_F Z_\A\backslash T_\A/(T_{\A} \cap K)} \hspace*{-2em} f(t) \, dt \ = \ \kappa \cdot \hspace*{-1em} \int\limits_{E^\times \A_F^\times \backslash \A_E^\times / {\mathcal O}_{\A_E}^\times} \hspace*{-2em}f(t) \, dt \ = \ \kappa \cdot f(c_0), $$
with $\kappa=\mu(T_\A \cap K) \neq 0$.  Indeed, by our choice of ``constant'' basis, we have $T_\A \cap K \cong {\mathcal O}_{\A_E}^\times$. For the final equality, note that the integration domain $E^\times \A_F^\times \backslash \A_E^\times / {\mathcal O}_{\A_E}^\times$ is isomorphic to the quotient of the class group of $E$ by that of $F$, and that both of these groups are trivial, so map to the identity matrix $c_0$ in $\Gamma \backslash \cT$. 

Hence we first of all find $f(c_0)=0$. For $\Phi=\Phi_\infty^k$, this equation transforms into $(\Phi_\infty^k f)(c_0)=0$ (cf.\ (\ref{defH})), and with (\ref{HeckeP1}) this leads to a system of equations for $f(c_i) \ (i=1,2,\dots)$ that can easily be shown inductively to only have the zero solution $f=0$. \end{proof}

\section{Three elliptic curves}

Now assume that $F$ is not rational, has class number one, a rational point $\infty$ and genus $\leq 1$. In this paper, we focus on such fields $F$, since it turns out that the space $\cV_F$ can be understood elaborating only existing structure results about the graph $\Gamma \backslash \cT$. 

The Hasse-Weil theorem implies that there are only three possibilities for $F$, which we conveniently number as follows:  $\{F_q\}_{q=2}^4$ with $F_q$ the function field of the projective curve $X_q/{\bf F}_q \ (q=2,3,4)$ are the respective elliptic curves $$y^2+y=x^3+x+1, \ y^2=x^3-x-1 \mbox{ and } y^2+y=x^3+\alpha$$ with $\F_4=\F_2(\alpha)$. Let $F^{(2)}_q = \F_{q^2} F_q$ denote the quadratic constant extension of $F_q$. 

\begin{figure}[ht]
  \centering
   \input{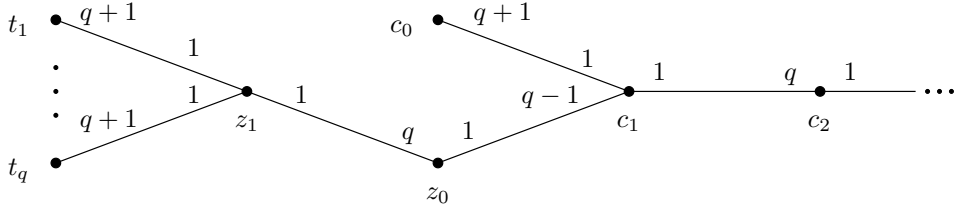} 
  \caption{The graph $\Gamma \backslash \cT$ for $F_q\ (q=2,3,4)$}
   \label{fig2}
\end{figure}

The graph $\Gamma \backslash \cT$ for $F_q \ (q=2,3,4)$ with the $\Phi_\infty$-weights is displayed in Figure \ref{fig2}, cf.\ Serre \cite{S1}, 2.4.4 and Ex.\ 3b)+3c) on page 117 and/or Takahashi \cite{Tak} for these facts. 

Further useful facts: One easily calculates that $X_q({\bf F}_{q^2})$ is cyclic of order $2q+1$; let $Q$ denote any generator. We will use lateron that the vertices $t_i$ correspond to classes of rank-two vector bundles on $X_q({\bf F}_q)$ that are pushed down from line bundles on $X_q({\bf F}_{q^2})$ given by multiples $Q,2Q,\dots,qQ$ of   $Q$, cf.\ Serre, loc.\ cit. For a representation in terms of matrices, one may refer to \cite{Tak}: if $iQ=(\ell,\ast) \in X_q({\bf F}_{q^2})$, then $t_i=\psl t^2, t^{-1}+\ell t \psr$.

\noindent We denote a function $f$ on $\Gamma \backslash \cT$ by a vector  $$f=[f(t_1),\dots,f(t_q) \mid f(z_0),f(z_1) \mid f(c_0),f(c_1),f(c_2),\dots].$$

\begin{proposition} \label{prop} A function $f \in \cV_{F_q}(F^{(2)}_q) \ (q=2,3,4)$ belongs to the $\Phi_\infty$-stable linear space $\cS$ of functions  \begin{equation} \label{basis} \cS := \{ \, [T_1,\dots,T_q \mid Z_0,Z_1 \mid C_0,C_1,C_2, \dots] \, \} \end{equation} with $C_0=-2(T_1+\dots+T_q)$ and for $k \geq 0$, \begin{equation}\label{blah} C_k = \left\{ \begin{array}{ll} \lambda_k Z_0 + \mu_k (T_1+\dots+T_q) & \mbox{ if } $k$ \mbox{ even} \\ \nu_k Z_1 & \mbox{ if } $k$ \mbox{ odd} \end{array} \right. \end{equation} 
for some constants $ \lambda_k, \mu_k, \nu_k$. In particular, $$\dim \cV_{F_q}(F^{(2)}_q) \leq \dim \cS =  q+2, $$ and $\dim \cV_{F_q}$ is finite.
\end{proposition}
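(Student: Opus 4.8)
The plan is to pull the toroidal condition back to the graph $\Gamma\backslash\cT$ and then exploit the Hecke recursion read off from Figure \ref{fig2}. First I would evaluate the defining integral of $\cV_{F_q}(F^{(2)}_q)$ at $g=1$ exactly as in the proof of Theorem \ref{P1}: choosing a torus $T$ with a constant basis so that $T_\A\cap K\cong\mathcal O_{\A_E}^\times$, the integral collapses to a finite sum over $E^\times\A_F^\times\backslash\A_E^\times/\mathcal O_{\A_E}^\times\cong\mathrm{Cl}(E)/\mathrm{Cl}(F)$. Since $F_q$ has class number one and $\#X_q(\F_{q^2})=2q+1$, this group is cyclic of order $2q+1$; under the push-forward of line bundles $iQ$, the trivial class lands on $c_0$ while the $q$ conjugate pairs $\{iQ,-iQ\}$ land on $t_1,\dots,t_q$. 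The integral is therefore a nonzero multiple $\kappa$ of $f(c_0)+2\sum_i f(t_i)$, and its vanishing gives $C_0=-2(T_1+\dots+T_q)$.

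Next, by the Hecke-invariance in \eqref{defH} the same vanishing holds with $f$ replaced by $\Phi_\infty^k f$, yielding for every $k\geq 0$ the relation
$$ (\Phi_\infty^k f)(c_0) + 2\sum_{i=1}^q (\Phi_\infty^k f)(t_i) = 0. $$
I would then substitute the explicit action of $\Phi_\infty$ from Figure \ref{fig2}. The decisive feature is that along the cusp the forward weight is $1$ (as in \eqref{HeckeP1}), so expanding $(\Phi_\infty^k f)(c_0)$ the straight-out path reaches $c_k$ with a nonzero coefficient, namely the product of the forward weights $c_0\to c_1\to\cdots\to c_k$. By contrast $(\Phi_\infty^k f)(t_i)$ only reaches cusp vertices $c_m$ with $\mathrm{dist}(t_i,c_0)+m\leq k$, hence $m<k$, because each $t_i$ lies strictly farther from the cusp than $c_0$. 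Thus in the displayed relation $C_k$ occurs with nonzero coefficient while all other cusp contributions carry index $<k$, so one can solve for $C_k$ in terms of $C_0,\dots,C_{k-1}$ and the finite values $T_i,Z_0,Z_1$.

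Induction now shows every $C_k$ is a universal linear combination of $Z_0,Z_1$ and $\sum_i T_i$, the symmetry in the $t_i$ being forced by the symmetry of the relation and of the local graph structure at the $t_i$. The bipartite structure of $\cT$ — with $c_k$ of type $k\bmod 2$, the vertices $z_0,t_i$ of even type and $z_1$ of odd type — then forces the parity split \eqref{blah}: for even $k$ only $Z_0$ and $\sum_i T_i$ can appear, for odd $k$ only $Z_1$. This proves $f\in\cS$, and the very same recursion shows the constants $\lambda_k,\mu_k,\nu_k$ are consistent, i.e.\ that $\cS$ is $\Phi_\infty$-stable. The remaining free data are exactly $T_1,\dots,T_q,Z_0,Z_1$, whence $\dim\cS=q+2$; and since $\cV_{F_q}=\bigcap_E\cV_{F_q}(E)\subseteq\cV_{F_q}(F^{(2)}_q)$, the finiteness of $\dim\cV_{F_q}$ follows at once.

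The main obstacle is entirely local: the clean induction above rests on the precise Hecke weights at the branch vertices $z_0,z_1,c_0$ where the cusp meets the finite part, and on verifying both that the $C_k$-coefficient arising from $(\Phi_\infty^k f)(c_0)$ never vanishes and that the recursion never reintroduces a dependence on individual $t_i$ nor violates the parity pattern. Establishing \eqref{blah} with uniform constants — equivalently, checking the $\Phi_\infty$-stability of $\cS$ — is exactly this combinatorial bookkeeping around $c_0,z_0,z_1$, and is where I expect the real work to lie.
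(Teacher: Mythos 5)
Your proposal is correct and follows the paper's own proof in all essentials: the period at $g=1$ is computed as a nonzero multiple of $f(c_0)+2\sum_i f(t_i)$ via the identification of the integration domain with $\mathrm{Pic}(X_q(\mathbf{F}_{q^2}))/\mathrm{Pic}(X_q(\mathbf{F}_{q}))\cong X_q(\mathbf{F}_{q^2})$, and the relations \eqref{blah} are then obtained by propagating this single constraint with powers of $\Phi_\infty$ and inducting along the cusp, with the bipartite structure giving the parity split. The only difference is one of presentation: where you argue structurally that the coefficient of $C_k$ is nonzero and all other cusp contributions have lower index, the paper carries out the same induction by writing the recursion for $C_{k+1}$ explicitly in terms of $\lambda_k,\mu_k,\nu_{k-1}$, which is also how it verifies the $\Phi_\infty$-stability of $\cS$ that you defer to ``combinatorial bookkeeping.''
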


\begin{proof} We choose arbitrary values $T_j$ at $t_j \ (j=1,\dots,q)$ and $Z_j$ at $z_j \ (j=1,2)$, and set $\tau=T_1+\dots+T_q$. We have $$ \int\limits_{T_F Z_\A\backslash T_\A} \hspace*{-5mm} f(t) \, dt = C_0+2 \tau.$$ Indeed, by the same reasoning as in the proof of Theorem \ref{P1}, the integration area maps to the image of $$\mathrm{ Pic}(X_q({\bf F}_{q^2}))/\mathrm{Pic}(X_q({\bf F}_q)) = X_q({\bf F}_{q^2})/X_q({\bf F}_{q}) = X_q({\bf F}_{q^2})$$ (the final equality since $X_q$ is assumed to have class number one) in $\Gamma \backslash \cT$, and these are exactly the vertices $c_0$ and $t_j$ (the latter with multiplicity two, since $\pm Q \in E({\bf F}_{q^2})$ map to the same vertex). The integral is zero exactly if $C_0=-2\tau$. Applying the Hecke operator $\Phi_\infty$ to this equation (cf.\ (\ref{defH})) gives $C_1=-2Z_1$, then applying $\Phi_\infty$ again gives $C_2=-(q+1)Z_0$. The rest follows by induction. If we apply $\Phi_\infty$ to the equations (\ref{blah}) for $k \geq 2$, we find by induction for $k$ even that $$C_{k+1} = \lambda_k C_1 + (\lambda_kq+\mu_kq(q+1)-q\nu_{k-1})Z_1$$ and for $k$ odd that $$C_{k+1} =(\nu_k-q\lambda_{k-1})Z_0+ (\nu_k-q\mu_{k-1})\tau.$$ \end{proof}

\begin{lemma} \label{basisofeig}
The space $\cS$ from \textup{(\ref{basis})} has a basis of $q+2$ $\Phi_\infty$-eigenforms, of which exactly $q-1$ are cusp forms with eigenvalue zero and support in the set of vertices $\{t_j\}$, and three are non-cuspidal forms with respective eigenvalues $0,q,-q$.
\end{lemma}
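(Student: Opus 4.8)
\emph{Strategy.} I will produce all $q+2$ eigenforms explicitly rather than argue abstractly. The idea is to split off the cusp forms supported on the vertices $\{t_j\}$, which will account for the $q-1$ cuspidal eigenforms of eigenvalue zero, and then to diagonalise the action of $\Phi_\infty$ induced on the three-dimensional quotient $\cS/\cS_0$, whose three eigenvalues will turn out to be $0,q,-q$. Since $\cS_0$ lies in the kernel of $\Phi_\infty$, an eigenvector of the quotient with \emph{nonzero} eigenvalue lifts automatically to an eigenform on $\cS$ (if $\Phi_\infty v=\lambda v+w$ with $w\in\cS_0$ and $\lambda\neq0$, then $v+\lambda^{-1}w$ is an eigenform); for the quotient eigenvalue $0$ I must check separately that it lifts to a genuine eigenform, equivalently that the $0$-eigenspace of $\Phi_\infty|_\cS$ has the full dimension $q$.

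\emph{The cusp forms.} Inside $\cS$, let $\cS_0$ be the subspace of functions supported on $\{t_j\}$, i.e.\ with $Z_0=Z_1=0$ and $C_k=0$ for all $k$. The defining relation $C_0=-2(T_1+\dots+T_q)$ then forces $T_1+\dots+T_q=0$, so $\dim\cS_0=q-1$. In Figure~\ref{fig2} each $t_j$ is adjacent only to $z_1$ and carries no self-loop, so for $f\in\cS_0$ one has $\Phi_\infty f(t_j)=(q+1)Z_1=0$; moreover $\Phi_\infty f(z_1)$ is a multiple of $T_1+\dots+T_q=0$, and $\Phi_\infty f$ vanishes at every other vertex because all the values feeding into it vanish. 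Hence $\Phi_\infty f=0$, so $\cS_0$ consists of eigenforms of eigenvalue $0$; having finite support, they are genuine cusp forms. This yields the asserted $q-1$ cuspidal eigenforms.

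\emph{The quotient.} By (\ref{blah}) the values $C_k$ of any $f\in\cS$ depend only on $(\tau,Z_0,Z_1)$ with $\tau=T_1+\dots+T_q$, so $\cS/\cS_0$ is three-dimensional with coordinates $(\tau,Z_0,Z_1)$ and carries an induced action of $\Phi_\infty$. Reading the Hecke weights at $t_j,z_1,z_0,c_0,c_1$ off Figure~\ref{fig2}, and using $C_0=-2\tau$, $C_1=-2Z_1$, $C_2=-(q+1)Z_0$ to re-express the boundary values that enter, I obtain the $3\times 3$ matrix of $\Phi_\infty$ on $(\tau,Z_0,Z_1)$ and compute its characteristic polynomial, which I claim equals $x(x-q)(x+q)=x^3-q^2x$; the three eigenvalues are therefore $0,q,-q$, all simple. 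The eigenvectors for $\pm q$ lift as above to eigenforms of $\cS$, and for eigenvalue $0$ I exhibit the lift directly, checking that it is annihilated by $\Phi_\infty$ (so that the $0$-eigenspace is indeed $q$-dimensional and $\Phi_\infty|_\cS$ is diagonalisable). None of these three lifts is cuspidal: an eigenform of eigenvalue $\rho$ obeys the cusp recursion $\rho C_k=qC_{k-1}+C_{k+1}$ for large $k$, so $C_k$ grows like $x^k$ with $x^2-\rho x+q=0$, whence $|x|=\sqrt q>1$ and the support runs out along the whole cusp ray. As a check, for $\rho=q$ this quadratic is exactly the Frobenius polynomial $x^2-qx+q$ of $X_q$, for $\rho=-q$ that of the quadratic constant-field twist, and for $\rho=0$ the polynomial $x^2+q$; cf.\ Proposition~\ref{Hecke}. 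Together with $\cS_0$ this gives eigenvalue $0$ with multiplicity $q$ and eigenvalues $\pm q$ each once, exhausting all $q+2$ eigenforms.

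\emph{Main obstacle.} The delicate point is the third step: extracting the exact local action of $\Phi_\infty$ at the ``body'' vertices $t_j,z_1,z_0,c_0,c_1$ from Figure~\ref{fig2} --- in particular the non-generic weights near $c_0$ and $c_1$, where the clean cusp recursion is not yet valid --- then verifying that the resulting $3\times 3$ matrix has characteristic polynomial exactly $x^3-q^2x$, and finally confirming semisimplicity at $0$ by writing down the non-cuspidal eigenvalue-zero eigenform. Once these computations are in place, the split into $q-1$ cuspidal eigenforms of eigenvalue $0$ and three non-cuspidal eigenforms of eigenvalues $0,q,-q$ follows immediately.
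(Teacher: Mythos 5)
Your proposal is correct and is in essence the paper's own computation, reorganized. The paper writes down the $\Phi_\infty$-eigenvalue equations $\lambda T_j=(q+1)Z_1$, $\lambda Z_1=\tau+Z_0$, $\lambda Z_0=qZ_1+C_1$, $\lambda C_0=(q+1)C_1$, etc.\ on all of $\cS$ at once and solves them in the two cases $\lambda=0$ (yielding $q$ eigenforms $f_k$ with $T_j=\delta_{jk}$) and $\lambda\neq0$ (forcing $\lambda=\pm q$), then identifies the cuspidal combinations a posteriori as $f_k-f_1$ via Harder's criterion $f(c_i)=0$ for $i\gg0$. Your filtration by the subspace $\cS_0$ supported on $\{t_j\}$ exhibits the $q-1$ cusp forms a priori and reduces the rest to a $3\times3$ determinant; that is a mild gain in transparency, not a different method.

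Two concrete cautions about the computation you defer. First, the relation $C_1=-2Z_1$ that you import from the proof of Proposition \ref{prop} is incompatible with your (correct) claim that the characteristic polynomial is $x^3-q^2x$. Applying $\Phi_\infty$ to $C_0+2\tau=0$ with the weights of Figure \ref{fig2} gives $(q+1)C_1+2q(q+1)Z_1=0$, i.e.\ $C_1=-2qZ_1$; this is also what the paper's own eigenforms $f_\pm$, which have $C_1=\mp2q^2$ and $Z_1=\pm q$, force. With that value the induced matrix on $(\tau,Z_0,Z_1)$ is
$\big(\begin{smallmatrix} 0&0&q(q+1)\\ 0&0&-q\\ 1&1&0 \end{smallmatrix}\big)$,
whose characteristic polynomial is indeed $x(x^2-q^2)$, whereas $C_1=-2Z_1$ would give $x\bigl(x^2-(q^2+2q-2)\bigr)$ and the lemma would fail; so you must recompute $\nu_1$ rather than quote it. Second, your growth argument for non-cuspidality of the three lifted eigenforms only bites once some $C_k$ with $k\ge 2$ is known to be nonzero (the zero tail also satisfies the recursion $\rho C_k=qC_{k-1}+C_{k+1}$); this is an easy check --- e.g.\ $C_2=(q+1)\tau\neq0$ for the non-cuspidal $0$-eigenform --- but it has to be made explicitly.
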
 

\begin{proof} With $\tau=T_1+\dots+T_q$, the function $$f=  [T_1,\dots,T_q\mid Z_0,Z_1 \mid -2\tau,C_1,C_2, \dots]$$ is a $\Phi_\infty$-eigenform with eigenvalue $\lambda$ if and only if 
$$ \lambda T_j = (q+1) Z_1; \ \lambda Z_1 = \tau+Z_0; \ \lambda Z_0 = qZ_1+C_1; \ \lambda (-2\tau) = (q+1) C_1;  \mbox{ etc.} $$ We consider two cases:

 (a) if $\lambda=0$, we find $q$ forms $$f_k=[{0,\dots,0,1,0,\dots,0}\mid 0,-1 \mid -q,\dots]$$ with $T_j=1 \iff j=k$.

 (b) if $\lambda \neq 0$, we find $\lambda=\pm q$ with eigenforms $$f_{\pm} = [q+1,\dots,q+1 \mid -q,\pm q \mid -2q(q+1), \mp 2q^2, \dots].$$ Since we found $q+2$ eigenforms, they span $\cS$. From the fact that a cusp form satisfies $f(c_i)=0$ for all $i$ sufficiently large (cf.\ Harder \cite{Harder}, Thm.\ 1.2.1), one easily deduces that a basis of cusp forms in $\cS$ consists of $f_k-f_1$ for $k=2,\dots,q$. \end{proof}

\begin{corollary}
The Riemann hypothesis is true for $\zeta_{F_q}\ (q=2,3,4)$.
\end{corollary}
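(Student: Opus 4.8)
The plan is to pin down every zero of $\zeta_{F_q}$ by realizing it as the spectral parameter of a toroidal Eisenstein series and then reading off the rigid constraints coming from the explicit space $\cS$. Since the quadratic constant extension factors as $\zeta_{F^{(2)}_q}(s)=\zeta_{F_q}(s)\,L(s,\chi)$ for the nontrivial character $\chi$ of $\mathrm{Gal}(F^{(2)}_q/F_q)$, with $L(s,\chi)$ a polynomial in $q^{-s}$ and hence pole-free, every zero of $\zeta_{F_q}$ is a zero of $\zeta_{F^{(2)}_q}$; it therefore suffices to show that each zero $s_0$ of $\zeta_{F^{(2)}_q}$ has $\mathrm{Re}(s_0)=\tfrac12$. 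First I would apply Proposition \ref{Hecke} with $E=F^{(2)}_q$: at such an $s_0$ the toroidal integrals of $E(s_0)$ vanish, so $E(s_0)\in\cV_{F_q}(F^{(2)}_q)$. By Proposition \ref{prop} this places $E(s_0)$ inside $\cS$, and since $E(s_0)$ is a Hecke eigenform that is non-cuspidal (its values along the ray $\{c_i\}$ do not eventually vanish, unlike cusp forms), Lemma \ref{basisofeig} forces its $\Phi_\infty$-eigenvalue into $\{0,q,-q\}$.

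The next step is the eigenvalue computation. Reading the recursion at the degree-one place exactly as in (\ref{HeckeP1}), an eigenfunction $f(c_n)=w^{n}$ satisfies $\lambda w^{n}=q\,w^{n-1}+w^{n+1}$, so $\lambda=w+q/w$, and the two spectral parameters of $E(s)$ are $w=q^{s}$ and $q/w=q^{1-s}$, giving the eigenvalue $\lambda(s)=q^{s}+q^{1-s}$. Combining with the previous paragraph, each zero obeys $q^{s_0}+q^{1-s_0}\in\{0,q,-q\}$. Writing $w=q^{s_0}$, this reads $w^{2}-cw+q=0$ with $c\in\{0,q,-q\}$, whose roots multiply to $q$ and have discriminant $c^{2}-4q$. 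For $q\le 4$ one checks $c^{2}-4q\le 0$ for every $c\in\{0,\pm q\}$, so the roots are complex conjugate (or equal) with $|w|^{2}=q$; hence $q^{\mathrm{Re}(s_0)}=\sqrt q$, i.e.\ $\mathrm{Re}(s_0)=\tfrac12$. This is precisely where $q\le 4$ is indispensable: on the critical line $\lambda(s)=2\sqrt q\,\cos(\mathrm{Im}(s)\log q)$ sweeps the real interval $[-2\sqrt q,2\sqrt q]$, which contains $\{0,\pm q\}$ exactly when $q\le 4$, and these values are then attained only on that line.

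The main obstacle I anticipate is not the spectral bookkeeping but justifying that each zero $s_0$ contributes a genuine, nonzero eigenform, so that the constraint of Lemma \ref{basisofeig} actually bites: Proposition \ref{Hecke} asserts only the vanishing of the toroidal integral, and one must rule out $E(s_0)\equiv 0$ and verify that the meromorphically continued $E(s)$ is holomorphic at $s_0$ with eigenvalue $\lambda(s_0)$. I would settle this by evaluating $E(s)$ explicitly on $\Gamma\backslash\cT$: along the ray $\{c_i\}$ its values form a nonzero combination of $q^{-is}$ and $q^{i(s-1)}$, which simultaneously exhibits non-cuspidality and confirms the eigenvalue $q^{s}+q^{1-s}$. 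A secondary bookkeeping point is to align the normalization of $s$ in the definition of $f(g,s)$ with the spectral variable above; any mismatch only reflects the functional-equation symmetry $s\leftrightarrow 1-s$ and leaves $\mathrm{Re}(s_0)=\tfrac12$ untouched. Once these are in place, every zero of $\zeta_{F^{(2)}_q}$, and in particular every zero of $\zeta_{F_q}$, lies on $\mathrm{Re}(s)=\tfrac12$, which is the Riemann hypothesis.
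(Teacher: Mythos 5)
Your argument is the same as the paper's: a zero $s_0$ of $\zeta_{F_q}$ is a zero of $\zeta_{F_q^{(2)}}$, so $E(s_0)$ lands in $\cV_{F_q}(F_q^{(2)})\subseteq\cS$ by Propositions \ref{Hecke} and \ref{prop}, and Lemma \ref{basisofeig} then forces the non-cuspidal eigenvalue $q^{s_0}+q^{1-s_0}$ into $\{0,\pm q\}$, whence $\mathrm{Re}(s_0)=\tfrac12$ by the discriminant computation (which the paper leaves as ``we deduce easily''). Your elaborations --- the factorization through the constant-field extension, the explicit quadratic in $w=q^{s_0}$, and the check that $E(s_0)$ is a genuine nonzero non-cuspidal eigenform --- are correct fillings-in of steps the paper compresses, not a different route.
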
 

\begin{proof} From Lemma \ref{basisofeig}, we deduce that the only possible $\Phi_\infty$-eigenvalue of a toroidal Eisenstein series is $\pm q$ or $0$, but on the other hand, from Lemma \ref{Hecke}, we know this eigenvalue is $q^s+q^{1-s}$ where $\zeta_{F_q}(s)=0$. We deduce easily that $s$ has real part $1/2$. \end{proof} 

\begin{remark}
One may verify that this proves the Riemann Hypothesis for the fields $F_q$ without actually computing $\zeta_{F_q}$: it only uses the expression for the zeta function by a Tate integral. Using a sledgehammer to crack a nut, one may equally deduce from Theorem \ref{P1} that $\zeta_{{\bf P}_1}$ doesn't have any zeros. At least the above corollary shows how enough knowledge about the space of toroidal automorphic forms does allow one to deduce a Riemann Hypothesis, in line with a hope expressed by Zagier \cite{Zagier}.
\end{remark}

\begin{theorem}
For $q=2,3,4$, $\cV_{F_q}$ is one-dimensional, spanned by the Eisenstein series of weight $s$  equal to a zero of the zeta function $\zeta_{F_q}$ of $F_q$.
\end{theorem}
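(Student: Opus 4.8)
The plan is to trap $\cV_{F_q}$ between an explicit one-dimensional lower bound and the space $\cS$ of Proposition~\ref{prop}, and then to collapse the upper bound to dimension one using tori other than the constant one.

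For the lower bound I would begin from a zero $s_0$ of $\zeta_{F_q}$. For every separable quadratic $E/F_q$ one has $\zeta_E=\zeta_{F_q}\cdot L(\,\cdot\,,\chi_E)$, so $\zeta_E(s_0)=0$, and Proposition~\ref{Hecke} then gives $E(s_0)\in\cV_{F_q}(E)$ for \emph{every} such $E$; hence $E(s_0)\in\cV_{F_q}$. As $\#X_q(\F_q)=1$, the Frobenius eigenvalues $\alpha,\overline\alpha$ satisfy $\alpha\overline\alpha=q$ and $\alpha+\overline\alpha=q$, so the $\Phi_\infty$-eigenvalue of $E(s_0)$ is $q^{s_0}+q^{1-s_0}=\alpha+\overline\alpha=q$. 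By Proposition~\ref{prop} this Eisenstein series lies in $\cS$, and by Lemma~\ref{basisofeig} it therefore spans the one-dimensional eigenvalue-$q$ line $\langle f_+\rangle$. This yields $\dim\cV_{F_q}\ge 1$ and pins down the expected generator.

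For the upper bound, Proposition~\ref{prop} gives $\cV_{F_q}\subseteq\cV_{F_q}(F^{(2)}_q)\subseteq\cS$, and since $\cV_{F_q}$ is $\mathcal H$-stable it decomposes along the $\Phi_\infty$-eigenspaces of $\cS$, which by Lemma~\ref{basisofeig} carry eigenvalues $q$, $-q$ (each a single non-cuspidal line) and $0$ (the $q-1$ cusp forms together with one non-cuspidal direction). I would dispose of all \emph{non-cuspidal} contributions at once: a non-cuspidal toroidal form is a toroidal Eisenstein series, so by Proposition~\ref{Hecke} its parameter $s$ satisfies $\zeta_E(s)=0$ for \emph{all} $E$. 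For the eigenvalue $-q$ one has $q^{s}=-\alpha$, and for the eigenvalue $0$ one has $q^{2s}=-q$; in both cases $q^s$ is not a Frobenius eigenvalue, so $\zeta_{F_q}(s)\ne 0$, and it suffices to exhibit one non-constant $E$ with $L(s,\chi_E)\ne 0$ to obtain $\zeta_E(s)\ne 0$. This removes $f_-$ and the Eisenstein direction in the $0$-eigenspace, leaving only the cusp forms.

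The main obstacle is thus to show that no nonzero cusp form in $\cS$ is toroidal for every $E$. On the cusp forms $f_k-f_1$, supported on $\{t_j\}$, the constant torus gives nothing (the defining relation $C_0=-2\tau$ of $\cS$ already forces its $g=1$ integral to vanish), so one must use non-constant tori. I see two routes. The first invokes Waldspurger's period formula in its function-field form (\cite{Wald}, \cite{Lys}): the $T_E$-integral of a cusp form $\phi$ equals, up to nonzero local factors, the central value $L(1/2,\pi_\phi\otimes\chi_E)$, so a single non-vanishing quadratic twist gives $\phi\notin\cV_{F_q}(E)$. The second, more explicit route is to choose a ramified quadratic $E'/F_q$, compute the image of $\mathrm{Pic}(Y')$ for the corresponding double cover $Y'\to X_q$ inside $\Gamma\backslash\cT$, and check that the resulting toroidal functional separates the finitely many values $\phi(t_j)$, forcing $\phi=0$. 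In either route the technical heart is the arithmetic of these covers --- their class groups and the combinatorics of where they meet the vertices $t_j$ --- which is exactly the ``existing structure'' the paper proposes to exploit. Combining the two bounds gives $\cV_{F_q}=\langle f_+\rangle$, one-dimensional and spanned by the Eisenstein series attached to a zero of $\zeta_{F_q}$.
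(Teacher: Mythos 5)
Your overall frame --- trap $\cV_{F_q}$ between the expected Eisenstein line and the space $\cS$ of Proposition~\ref{prop}, decompose along $\Phi_\infty$-eigenvalues via Lemma~\ref{basisofeig}, and then kill the unwanted directions torus by torus --- is the paper's frame too, and your lower bound (every $\zeta_E$ is divisible by $\zeta_{F_q}$, so $E(s_0)$ is toroidal for all $E$ and has $\Phi_\infty$-eigenvalue $\alpha+\overline\alpha=q$) is correct. But at both places where the real work happens your proposal stops short of an argument. First, for the eigenvalue $-q$ direction: you correctly observe that it suffices to produce one quadratic $E$ with $L(s_1,\chi_E)\neq 0$, where $s_1$ is the zero of $L_q=\zeta_{F_q^{(2)}}/\zeta_{F_q}$, but you do not produce one, and this existence is exactly the content that has to be checked --- the constant extension fails by construction, and a priori some other twist could also vanish at $s_1$. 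The paper does this by an explicit choice, the genus-two ramified extension $E_q=F_q(z)$ with $x=z(z+1)$, and a direct computation of $\tilde L_q$ for $q=2,3,4$ showing it shares no zero with $L_q$. Without some such computation the $f_-$ direction is not eliminated. (A smaller omission: you fold residues of Eisenstein series into ``non-cuspidal toroidal forms are toroidal Eisenstein series''; the paper disposes of $\mathcal R$ separately by noting its $\Phi_\infty$-eigenvalues are $\pm(q+1)$, which do not occur in $\cS$.)

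The more serious gap is the cuspidal part. Your first route (Waldspurger) cannot close the argument as stated: the period formula converts toroidality into vanishing of central values of quadratic twists, so to conclude $\mathcal C=0$ you would still need a non-vanishing theorem for some quadratic twist of each cuspidal Hecke eigenform occurring in $\cS$ --- a substantial input you neither supply nor reference in a usable form (indeed the paper's closing remark points out that in general toroidal cusp forms are precisely those with vanishing central $L$-value, so they need not be zero; something special about this situation must be used). Your second route is only a plan. The paper's actual argument is different in kind and uses no further tori at all: since a toroidal cusp form in $\cS$ is supported on $\{t_i\}$ and (by multiplicity one) may be taken to be a simultaneous $\mathcal H$-eigenform, one applies the Hecke operator $\Phi_P$ at the degree-two place $P$ of $F_q$ corresponding to the vertex $\t=t_i$ and computes (Lemma~\ref{PHIP}) that $\Phi_P(c_0)=(q+1)c_2+q(q-1)\t$; evaluating the eigenform identity at $c_0$, where $f$ vanishes, forces $q(q-1)f(\t)=0$. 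That explicit matrix reduction on the Bruhat--Tits tree is the missing idea, and it is what makes the cuspidal part elementary rather than dependent on period formulas.
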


\begin{remark}
Note that the functional equation for $E(s)$ implies  that $E(s)$ and $E(1-s)$ are linearly dependent, so it doesn't matter which zero of $\zeta_{F_q}$ is taken.
\end{remark} 

\begin{proof} By Lemma \ref{basisofeig},  $\cV_{F_q}$ is a $\Phi_\infty$-stable subspace of the  finite dimensional space $\cS$, and $\Phi_\infty$ is diagonalizable on $\cS$. By linear algebra, the restriction of $\Phi_\infty$ is also diagonalizable on $\cV_{F_q}$ with a subset of the given eigenvalues, hence $\cV_{F_q}$ is a subspace of the space of automorphic forms for the corresponding eigenvalues of $\Phi_\infty$. By \cite{Li}, Theorem 7.1, it can therefore be split into a direct sum of a space of Eisenstein series $\mathcal E$, a space of residues of Eisenstein series $\mathcal R$, and a space of cusp forms $\mathcal C$ (note that in the slightly different notations of \cite{Li}, ``residues of Eisenstein series'' are called ``Eisenstein series'', too).  We treat these spaces separately. 

{$\mathcal E:$}\ By Proposition \ref{Hecke}, $\cV_{F_q}(F_q^{(2)})$ contains exactly two Eisenstein series, one corresponding to a zero $s_0$ of $\zeta_{F_q}$, and one corresponding to a zero $s_1$ of $$L_q(s):=\zeta_{F_q^{(2)}}(s)/\zeta_{F_q}(s).$$ Now consider the torus $\tilde{T}$ corresponding to the quadratic extension $E_q=F_q(z)/F_q$ of genus two defined by $x=z(z+1).$ Set $$\tilde{L}_q(s):=\zeta_{E_q}(s)/\zeta_{F_q}(s)$$ and $T=q^{-s}$. One computes immediately that $L_q=qT^2+qT+1$ but $$\tilde{L}_2=2T^2+1, \tilde{L}_3=3T^2+T+1 \mbox{ and } \tilde{L}_4=4T^2+1.$$ Since $L_q$ and $\tilde{L}_q$ have no common zero, the $\tilde{T}$-integral of the Eisenstein series of weight $s_1$ is non-zero, and hence it doesn't belong to $\cV_{F_q}$. Hence $\mathcal E$ is as expected.

{$\mathcal R:$}\  Elements in $\mathcal R$ have $\Phi_\infty$-eigenvalues $\neq 0, \pm q$, so cannot even occur in $\cS$: since the class number of $F_q$ is one, $\mathcal R$ is spanned by the two forms $$r_\pm:=[1,\dots,1\mid \pm1,1 \mid 1,\pm 1,1,\pm1,\dots]$$ with $r(c_{i})=(\pm 1)^i$, and this is a $\Phi_\infty$-eigenform with eigenvalue $\pm(q+1)$. (In general, the space is spanned by elements of the form $\chi \circ \det$ with $\chi$ a class group character, cf.\ \cite{Gelbart}, p.\ 174.)

{$\mathcal C:$}\  By multiplicity one, $\mathcal C$ has a basis of simultaneous $\mathcal H$-eigenforms. From Lemma \ref{basisofeig}, we know that potential cusp forms in $\cV_{F_q}$ have support in the set of vertices $\{t_i\}$. To prove that ${\mathcal C}=\{0\}$, the following therefore suffices:

\begin{proposition} The only cusp form which is a simultaneous eigenform for the Hecke algebra $\mathcal H$ and has support in $\{t_i\}$ is $f = 0$. 
\end{proposition}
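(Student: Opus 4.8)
The plan is to exploit that the vertices carrying the potential support of $f$ are exactly the CM locus for the constant extension $F_q^{(2)}/F_q$, and that on this locus the Hecke algebra acts through the arithmetic of $\mathrm{Pic}^0(X_q \otimes \mathbf{F}_{q^2})$. Recall from the proof of Proposition \ref{prop} that the vertices $\{c_0, t_1, \dots, t_q\}$ are precisely the image of $X_q(\mathbf{F}_{q^2}) = \mathrm{Pic}^0(X_q \otimes \mathbf{F}_{q^2})$ in $\Gamma \backslash \cT$, with $c_0$ the class of the identity $O$ and $t_i$ the common image of $iQ$ and $-iQ$. Writing $G = \mathbf{Z}/(2q+1)$ for this cyclic group and $\sigma$ for the Frobenius of $\mathbf{F}_{q^2}/\mathbf{F}_q$, one checks (using $\#X_q(\mathbf{F}_q) = 1$, which forces $\sigma$ to be an involution fixing only $O$) that $\sigma$ acts by inversion $[-1]$; hence a function supported on $\{c_0, t_i\}$ is the same datum as a $\sigma$-invariant (``even'') function $h$ on $G$, via $h(iQ) = f(t_i)$ and $h(O) = f(c_0)$. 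Since $f$ is assumed supported on $\{t_i\}$, the hypothesis becomes simply $h(O) = 0$, and the goal is to force $h \equiv 0$.

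First I would prove the following local statement: for a place $P$ of $F_q$ of even degree --- equivalently, one that splits in $F_q^{(2)}$, say $P\mathcal{O}_{F_q^{(2)}} = \mathfrak{P}\bar{\mathfrak{P}}$ --- the Hecke operator $\Phi_P$ sends the CM bundle attached to $iQ$ to the formal sum of the CM bundles attached to $iQ \pm g_P$, where $g_P = [\mathfrak{P}] \in G$, together with possibly some non-CM neighbours. In the bundle description $t_i = \psl t^2, t^{-1} + \ell t \psr$ this is the assertion that Hecke modification at $P$ of a bundle pushed down from $\mathcal{L}_i$ remains among the push-downs of $\mathcal{L}_i \otimes \mathfrak{P}^{\pm 1}$. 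The crucial consequence is that, because $f$ vanishes off $\{t_i\}$, evaluating the eigenform equation $\Phi_P f = \lambda_P f$ only at the CM vertices $v \in \{c_0, t_i\}$ yields the clean relation $h(v + g_P) + h(v - g_P) = \lambda_P\, h(v)$; that is, $h$ is an eigenfunction for convolution on $G$ by $\delta_{g_P} + \delta_{-g_P}$. (The equations at the remaining vertices $z_0, z_1, c_1, c_2, \dots$ impose further constraints that I will not need.)

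It then remains pure linear algebra on the finite abelian group $G$. As $P$ ranges over the even-degree places, the classes $g_P$ run through a generating set of $G$, and the real-valued functions $g \mapsto \chi(g) + \overline\chi(g)$ separate the unordered pairs $\{\chi, \overline\chi\}$ of characters of $G$. Hence the simultaneous eigenfunctions of all the convolution operators $\delta_{g_P} + \delta_{-g_P}$, among even functions on $G$, are exactly the scalar multiples of $h_\chi := \chi + \overline\chi$ for a single pair $\{\chi, \overline\chi\}$. Since $f$ is a simultaneous $\mathcal{H}$-eigenform, $h$ must be such an $h_\chi$; but $h_\chi(O) = 2 \neq 0$, whereas $h(O) = 0$. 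Therefore $h = 0$ and $f = 0$.

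The main obstacle is the geometric input of the second paragraph: verifying precisely that Hecke modification at a split place $P$ carries the CM locus into itself by the translations $\pm g_P$, and that the non-CM neighbours --- on which $f$ vanishes and which therefore drop out of the equations at CM vertices --- genuinely do not interfere. Everything else is a density statement (that the $g_P$ generate $G$ and that the pairs are separated), which is immediate since $G$ is one of the fixed small cyclic groups $\mathbf{Z}/5$, $\mathbf{Z}/7$, $\mathbf{Z}/9$. I would establish the geometric input either abstractly, through the compatibility of the Hecke correspondence with pushforward along $X_q \otimes \mathbf{F}_{q^2} \to X_q$ and automorphic induction from the torus $E^\times$, or by an explicit computation with the matrices $\psl t^2, t^{-1} + \ell t \psr$ at the smallest even-degree place, following Takahashi's description in \cite{Tak}.
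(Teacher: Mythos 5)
Your proposal is, at its core, the paper's own argument, but wrapped in a layer of harmonic analysis on $G=X_q(\mathbf{F}_{q^2})$ that is both unnecessary and not fully justified. The one genuinely nontrivial input you isolate --- that for a degree-two place $P$ corresponding to the Frobenius orbit $\{jQ,-jQ\}$, the CM-component of $\Phi_P$ applied at a CM vertex lands at the translates by $\pm g_P$ --- is exactly the paper's Lemma \ref{PHIP}, which computes $\Phi_P(c_0)=(q+1)c_2+q(q-1)\t$ by precisely the explicit matrix reduction you list as your second option (Takahashi's normal forms $\psl t^2, t^{-1}+\ell t\psr$). Note the multiplicities, though: the CM neighbour $\t$ occurs with multiplicity $q(q-1)$, not $2$, so your ``clean relation'' $h(v+g_P)+h(v-g_P)=\lambda_P h(v)$ is not literally the convolution identity you want. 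To make $h$ an honest eigenfunction of $\delta_{g_P}+\delta_{-g_P}$ you would further have to check that the multiplicities of the two CM neighbours are equal to each other and independent of the CM vertex $v$; you prove neither, and this is the only real gap in the write-up.

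The gap is harmless, however, because the entire character-theoretic superstructure is superfluous: the relation at the single vertex $v=O$ (i.e.\ at $c_0$) already finishes the proof. Every nonzero $j$ arises as $g_P$ for the degree-two place $P$ corresponding to $\{jQ,-jQ\}$, and evaluating $\Phi_P f=\lambda_P f$ at $c_0$ gives $0=\lambda_P f(c_0)=q(q-1)f(\t)+(q+1)f(c_2)=q(q-1)f(\t)$, whence $f(\t)=0$ for every $\t\in\{t_i\}$ --- with no need to know how $\Phi_P$ acts at the other CM vertices, and no need to separate characters of $G$ (as you have set things up, that step only reproves that an even function vanishing everywhere on $G$ is zero). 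This is exactly how the paper concludes. So: you have correctly identified the key geometric lemma and the right family of Hecke operators, but you should either prune the convolution argument down to the single evaluation at $c_0$, or else supply the uniform-multiplicity statement that your general relation silently assumes.
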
 

{\it Proof.\/} Let $f$ denote such a form. Fix a vertex $\t \in \{t_i\}$. It corresponds to a point $P=(\ell, \ast)$ on $X_q({\bf F}_{q^2})$, which is a place of degree two of ${\bf F}_q(X_q)$. Let $\Phi_P$ denote the corresponding Hecke operator. We claim that 

\begin{lemma} \label{PHIP} $\Phi_P(c_0) = (q+1) c_2 + q(q-1) \t.$ \end{lemma}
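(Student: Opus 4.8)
The plan is to read $\Phi_P$ as the geometric Hecke operator at the degree-two place $P$ and to evaluate it on the trivial bundle. Identifying the vertex $c_0$ with the class of $\mathcal{O}_{X_q}^2$, the operator sends it to $\Phi_P(c_0)=\sum_L[\mathcal{E}_L]$, where $L$ runs over the $q^2+1$ lines in the fibre $\mathcal{O}^2\otimes\kappa(P)=\kappa(P)^2=\F_{q^2}^2$ (here $\kappa(P)=\F_{q^2}$ since $\deg P=2$), and $\mathcal{E}_L\subset\mathcal{O}^2$ is the kernel of $\mathcal{O}^2\twoheadrightarrow\kappa(P)^2\twoheadrightarrow\kappa(P)^2/L$. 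Note that the total multiplicity is forced to be $q^2+1=(q+1)+q(q-1)$, which already matches the asserted coefficients and is the analogue of the $q+1$ neighbours produced by $\Phi_\infty$ at the degree-one place. Each $\mathcal{E}_L$ has cokernel $\kappa(P)$ supported at the single fixed place $P=\pi(iQ)$ and determinant $\mathcal{O}(-P)$; since class number one gives $\mathcal{O}(-P)\cong\mathcal{O}(-2\infty)$, all these modifications live, up to twist by a line bundle, among the ``even'' vertices, and it remains only to sort the $q^2+1$ lines into two types.

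The key invariant is whether $\mathcal{E}_L$ has a nonzero global section. A section of $\mathcal{E}_L\subset\mathcal{O}^2$ is a constant vector $v\in H^0(X_q,\mathcal{O})^2=\F_q^2$ whose reduction at $P$ lies on $L$, so $H^0(\mathcal{E}_L)=\F_q^2\cap L$ inside $\F_{q^2}^2$; this is nonzero precisely for the $q+1$ lines defined over $\F_q$, i.e. $L\in\P^1(\F_q)\subset\P^1(\F_{q^2})$. For such a rational $L$ the section produces a subbundle with line-bundle quotient, giving $0\to\mathcal{O}\to\mathcal{E}_L\to\mathcal{O}(-P)\to 0$; this splits because $\mathrm{Ext}^1(\mathcal{O}(-P),\mathcal{O})=H^1(X_q,\mathcal{O}(P))=0$ (as $\deg\mathcal{O}(P)=2>2g-2$). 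Hence $\mathcal{E}_L\cong\mathcal{O}\oplus\mathcal{O}(-P)\cong\mathcal{O}\oplus\mathcal{O}(-2\infty)$, which is exactly the split vertex $c_2$ on the cuspidal ray. This accounts for the coefficient $q+1$ of $c_2$.

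For the remaining $q^2+1-(q+1)=q(q-1)$ lines, which are not $\F_q$-rational, one has $H^0(\mathcal{E}_L)=0$, so $\mathcal{E}_L$ carries no nonnegative subbundle and in particular is none of the split vertices $c_k$. To pin it down as $\t=t_i$ I would pass to the constant extension $X'=X_q\otimes\F_{q^2}$, for which $\pi_*\mathcal{O}_{X'}=\mathcal{O}^2=c_0$: modifications of $\mathcal{O}^2$ at $P$ correspond via $\pi_*$ to modifications of $\mathcal{O}_{X'}$ along the fibre $\pi^{-1}(P)=\{iQ,-iQ\}$ (the two conjugate points, using that Frobenius acts as $-1$ on the cyclic group $X_q(\F_{q^2})$ of order $2q+1$). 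A non-$\F_q$-rational line lowers $\mathcal{O}_{X'}$ at one of these two points, and its push-forward is Serre's bundle attached to $iQ$, namely $t_i=\t$; all $q(q-1)$ such lines yield this one vertex. Summing the two cases gives $\Phi_P(c_0)=(q+1)\,c_2+q(q-1)\,\t$.

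The hard part will be this last identification: verifying that every non-rational $L$ produces precisely $\t=t_i$, rather than another stable class $t_j$ (the abstract determinant cannot distinguish them, all being $\cong\mathcal{O}(-2\infty)$), and that the $q(q-1)$ lines genuinely collapse to a single vertex. I would secure this either through the $\pi_*$-description above, reading off that the quotient is supported over $iQ$, or more explicitly by the matrix route of Takahashi: reduce a representative of $\mathcal{E}_L$ modulo $\GL_2(A)$ against the local uniformizer $t$ at $\infty$ and match it to the normal form $\t=t_i=\psl t^2, t^{-1}+\ell t \psr$, the rational lines instead reducing to $\psl t^{-2}, 0 \psr=c_2$. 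A secondary point to fix carefully is the normalization of $\Phi_P$, i.e. that it sums over the $q^{\deg P}+1=q^2+1$ sublattices of colength $\kappa(P)$, which is precisely what makes the two coefficients add up to $q^2+1$.
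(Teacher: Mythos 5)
Your geometric setup (modifications $\mathcal{E}_L\subset\mathcal{O}^2$ indexed by the $q^2+1$ lines $L\subset\kappa(P)^2$, split according to $\F_q$-rationality of $L$) is a legitimate reformulation of what the paper does with the matrices $m_\infty$ and $m_b=\big(\begin{smallmatrix}1&b\\0&\pi\end{smallmatrix}\big)$, $b\in\kappa(P)\cong\F_{q^2}$; the count $q^2+1=(q+1)+q(q-1)$ and the identification of the $q+1$ rational lines with $c_2$ are correct and match the paper's case $b=b_0\in\F_q$ (plus $m_\infty$). But the half of the lemma that actually requires work is exactly the part you defer, and the primary route you propose for it does not go through. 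The fibre of $\pi_*\mathcal{O}_{X'}$ at $P$ is $\kappa(iQ)\times\kappa(-iQ)$ as a module over $\mathcal{O}_{X'}$, and a $\kappa(P)$-line in it is stable under this action only if it is one of the two coordinate lines; hence only $2$ of the $q(q-1)$ non-rational lines correspond to modifications of $\mathcal{O}_{X'}$ along $\pi^{-1}(P)=\{iQ,-iQ\}$. For $q=3,4$ the remaining $q(q-1)-2$ lines are not push-forwards of line-bundle modifications in the way you describe, so the claim that ``a non-$\F_q$-rational line lowers $\mathcal{O}_{X'}$ at one of these two points'' is simply false there, and nothing in your argument yet shows that all $q(q-1)$ classes coincide, let alone that they equal $\t=t_i$ rather than some other $t_j$.

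The gap is closable within your framework, but it needs an actual argument: for instance, $\mathrm{Aut}(\mathcal{O}^2)=\GL_2(\F_q)$ acts on $\mathbf{P}(\kappa(P)^2)\cong\mathbf{P}^1(\F_{q^2})$ with $\mathcal{E}_{gL}\cong\mathcal{E}_L$, and since the stabilizer of a non-rational point is a non-split torus of order $q+1$ in $\mathrm{PGL}_2(\F_q)$, the action on the $q(q-1)$ non-rational lines is transitive; one may then compute $\mathcal{E}_L$ for a single coordinate line, where it genuinely is $\pi_*$ of a degree $-1$ line bundle attached to $\pm iQ$, i.e.\ Serre's vertex $t_i$. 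The paper instead does the identification directly and uniformly: it reduces every $m_b$ with $b=b_0+b_1y$, $b_1\neq 0$, to $m_y$ by elementary row/column operations and then expands $y/(x-\ell)=t^{-1}+\ell t+O(t^2)$ to land on Takahashi's normal form $\psl t^2,t^{-1}+\ell t\psr=\t$. Either repair is fine, but as written your proof establishes only the coefficient of $c_2$ and the total mass $q^2+1$, not the statement of the lemma.
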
 

Given this claim, we finish the proof as follows: we assume that $f$ is a $\Phi_P$-eigenform with eigenvalue $\lambda_P$. Then 
$$ 0 = \lambda_P f(c_0) = \Phi_P f(c_0) = q(q-1) f(\t) + (q+1) f(c_2) = q(q-1)f(\t) $$ since $f(c_2)=0$,
hence $f(\t)=0$ for all $\t$.

\medskip 

{\it Proof of Lemma \ref{PHIP}\/} \  As in \cite{Gelbart}, 3.7, the Hecke operator $\Phi_P$  maps the identity matrix (= the vertex $c_0$) to the set of vertices corresponding to the matrices $ m_\infty:=\diag(\pi,1)$ and $m_b:=\big( \begin{smallmatrix} 1 & b \\ 0 & \pi \end{smallmatrix} \big)$, where $\pi=x-\ell$ is a local uniformizer at $P$ and $b$ runs through the residue field at $P$, which is $${\bf F}_q[X_q]/(x-\ell) = {\bf F}_q[y]/F(\ell,y) \cong {\bf F}_{q^2}$$ if $F(x,y)=0$ is the defining equation for $X_q$. Hence we can represent every such $b$ as $b=b_0+b_1 y$. We now reduce these matrices to a standard form in $\Gamma \backslash \cT$ from \cite{Tak}, \S 2. By right multiplication with $\psl 1,-b_0 \psr $, we are reduced to considering only $b=b_1 y$.

If $b_1=0$, then the matrix is $m_b=\diag(1,\pi)\sim \diag(\pi^{-1},1)$. Recall that $t=x/y$ is a uniformizer at $\infty$, so $x-\ell = t^{-2} \cdot A$ for some $A \in {\bf F}_q\psl t \psr^*$. Hence right multiplication by $\diag(A^{-1},1)$ gives that this matrix reduces to $c_2$. The same is true for $m_\infty$.

 On the other hand, if $b_1 \neq 0$, multiplication on the left by $\diag(1,b_1)$ and on the right by $\diag(1,b_1^{-1})$ reduces us to considering $m_y$. By multiplication on the right with $$\diag((x-\ell)^{-1}\cdot A,(x-\ell)^{-1}),$$ we get $m_y \sim \psl t^2, y/(x-\ell) \psr$. Now note that
$$\frac{y}{x-\ell} = \frac{y}{x} \cdot \left(1+ \frac{\ell}{x} + \left(\frac{\ell}{x}\right)^2 + \dots\right) = t^{-1}+\ell t + \beta(t) t^2$$ for some $\beta \in {\bf F}_q \psl t \psr $. Hence right multiplication with $\psl 1,-\beta \psr $ gives $m_y \sim \psl t^2,t^{-1} + \ell t \psr,$ and this is exactly the vertex $\t$. \end{proof}

\begin{remark} 
Using different methods, more akin the geometrical Langlands programme, the second author (\cite{Lor}) has generalized the above results as follows. For a general function field $F$ of genus $g$ and class number $h$, one may show that $\cV_F$ is finite dimensional. Its Eisenstein part is of dimension at least $h(g-1)+1$. Residues of Eisenstein series are never toroidal. For general elliptic function fields, there are no toroidal cusp forms. For a general function field, the analogue of a result of Waldspurger (\cite{Wald}, Prop.\ 7) implies that the cusp forms in $\cV_F$ are exactly those having vanishing central $L$-value. 
\end{remark}

\end{document}